\documentclass[reqno,12pt,dvipsnames ]{amsart}

\usepackage[a4paper,margin=1.1 in]{geometry}
\usepackage{amsmath,amssymb,amsthm,mathtools}
\usepackage{enumitem}
\usepackage{microtype}

\usepackage[colorlinks=true,
            linkcolor=red,
            citecolor=blue,
            urlcolor=blue]{hyperref}

\numberwithin{equation}{section}
\newtheorem{theorem}{Theorem}[section]
\newtheorem{proposition}[theorem]{Proposition}

\newtheorem{claim}[theorem]{Claim}
\newtheorem{corollary}[theorem]{Corollary}
\theoremstyle{definition}

\theoremstyle{remark}
\newtheorem{remark}[theorem]{Remark}
\newtheorem{example}[theorem]{Example}
\newcommand{\C}{\mathbb C}

\newcommand{\ord}{\operatorname{ord}}
\newcommand{\hot}{\operatorname{h.o.t}}
\setlist[enumerate]{itemsep=3pt,topsep=5pt}
\title[Bi-Lipschitz invariance of the transverse polynomial]{Bi-Lipschitz invariance of the transverse polynomial along polar arcs}
\author{Nhan Nguyen}
\address{FPT University, Danang, Vietnam}
\email{nguyenxuanvietnhan@gmail.com}

\subjclass[2020]{Primary 32S15; Secondary 32S05}

\keywords{Bi-Lipschitz right equivalence, holomorphic function germs, polar arcs, gradient canyons, transverse polynomials, Newton polygons}

\begin{document}
\maketitle

\begin{abstract}
We study transverse polynomials along polar arcs of reduced
holomorphic plane function germs. We prove that, under
bi-Lipschitz right equivalence, matched tangential polar arcs
have the same transverse order and their transverse polynomials
agree up to nonzero rescalings of the source and target,
at every rational scale $1<q<d(\gamma)$, where $d(\gamma)$
is the gradient canyon degree. This range is sharp as the result can fail at both endpoints, even under analytic right equivalence.

As an application, we recover the bi-Lipschitz invariance of the augmented Newton polygon proved by Migus--P\u{a}unescu--Tib\u{a}r. We also give an example showing that the transverse polynomial contains new information not detected by the polar-value invariants and augmented Newton polygons.
\end{abstract}

\section{Introduction}
Unlike the Lipschitz geometry of complex set germs, bi-Lipschitz right equivalence of analytic function germs admits continuous moduli, as proved by Henry and Parusi\'nski \cite{HP03, HP04}. For holomorphic plane function germs, their invariant is obtained from the leading terms of the function along tangential polar arcs. 

A geometric framework for these phenomena was subsequently developed by
P\u{a}unescu--Tib\u{a}r \cite{PT19} through the theory of gradient canyons.  A polar arc
$\gamma$ determines a gradient degree $d(\gamma)$ and a corresponding canyon,
which describes the smallest transverse scale at which the order of the gradient stabilizes. It was shown in \cite{PT19} that  gradient canyons, their degrees, multiplicities and mutual contacts are preserved by bi-Lipschitz right equivalence.  In particular, the leading Henry--Parusi\'nski modulus can be recovered through this canyon structure \cite{MPT22}.

More recently, finer invariants have been extracted from the values of the
function along different polar arcs.  In \cite{Ng24}, we introduced an
invariant, denoted $\rm{Inv}^2$, obtained by comparing normalized
polar-value expansions.  An equivalent formulation was independently obtained by Migus--P\u{a}unescu--Tib\u{a}r \cite{MPT25}, who also developed a recursive construction of higher-level invariants. In \cite{MPT26}, they proved the bi-Lipschitz invariance of the augmented Newton polygon associated with $f_x$ along a gradient canyon.

 In this paper, we study transverse polynomials along polar arcs.
Let $f:(\C^2,0)\to(\C,0)$ be a reduced holomorphic
germ, and let $\gamma$ be a tangential polar arc of canyon degree
$d>1$. For each rational $q\geq1$, we write 
$$f(\gamma(y)+zy^q,y)-f(\gamma(y),y) = y^{H_f(q)}(P_{f,\gamma, q}(z) + o(1));$$ and call the first nonzero coefficient  polynomial 
$P_{f,\gamma,q}$ the transverse polynomial at
scale $q$.

This leading polynomial construction was used by
Koike--Parusi\'nski \cite{KP13} for arbitrary allowable arcs
in the real analytic setting; here we apply the same construction
to polar arcs. They proved that bi-Lipschitz right equivalences
carrying one allowable arc onto another as set germs preserve
the corresponding relative Newton boundaries
\cite[Corollary~1.6]{KP13}. If the equivalence is $C^1$ and
tangent to the identity, the associated polynomials also
coincide \cite[Proposition~1.7]{KP13}. More generally, this equality holds if the image of one
allowable arc is contained in a suitable small horn neighbourhood of
the other  \cite[Proposition~1.9]{KP13}.

Our main result (Theorem \ref{thm_main}) shows that bi-Lipschitz right equivalence preserves the transverse orders and transverse polynomials along matched
tangential polar arcs, up to nonzero rescalings, at every rational
scale $1<q<d$. More precisely, if $f \sim_{\text{bi-Lip}} g$ and $\gamma$
and $\beta$ are matched polar arcs with common canyon degree
$d>1$, then
$$
H_f(q)=H_g(q)=H(q),
\qquad
P_{f,\gamma,q}(z)=A^{H(q)}P_{g,\beta,q}(B_qz),
$$
where $A,B_q\in\C^*$. The constant $A$ is determined by
the longitudinal scaling in the canyon correspondence and is
independent of $q$, while $B_q$ may depend on $q$. The range $1<q<d$ is sharp: the conclusion may fail at either
endpoint, even under analytic right equivalence
(see Examples~\ref{ex_1} and~\ref{ex_2}).
The proof uses the correspondence of gradient canyons by Migus-P\u{a}unescu--Tib\u{a}r \cite{MPT25},
rescaling around points of a polar arc, and the affine
rigidity result of C\^amara--Fernandes \cite[Proposition 3.1]{CF25}.

Section~\ref{section2.1} studies how the transverse polynomial
depends on the scale. The function $H_f(q)$ is piecewise affine,
with breakpoints at the contact orders
$\delta_j=\ord(\gamma_j-\gamma)$.
Away from these scales, the transverse polynomial is a monomial.
At $q=\delta_j$, the coefficient $a_j$ in
$$
\gamma_j(y)-\gamma(y)=a_jy^{\delta_j}+\hot
$$
appears as a critical point of the transverse polynomial. 

The main theorem is presented in Section \ref{section2.2}. Section~\ref{section3} relates $H_f(q)-q$ to the supporting lines
of the augmented Newton polygon, giving a short proof of the
invariance theorem of Migus--P\u{a}unescu--Tib\u{a}r
\cite{MPT26}. Finally, Section~\ref{section4} gives an example showing that the transverse polynomial provides information
not detected by the polar-value invariants and augmented Newton polygons discussed above.

\subsection*{AI Assistance}
ChatGPT helped in designing the example in Section \ref{section4}. We take full responsibility for its correctness.

\subsection*{Acknowledgments}
 The research was supported by the Vietnam Ministry of Education and Training (MOET) under grant number B2025-CTT-01.

\section{Polar arcs and transverse polynomials}\label{section2}

Let $f:(\C^2,0)\to(\C,0)$ be a reduced holomorphic function germ of multiplicity
$m\ge2$. Choose coordinates in which $f$ is mini-regular in $x$, that is,
$f_m(1,0)\ne0$, where $f_m$ is its initial homogeneous part.
A \emph{polar arc} of $f$ is a convergent Puiseux arc $(\gamma(y),y)$ satisfying
$$
 f_x(\gamma(y),y)=0,\quad f(\gamma(y),y)\not\equiv0.
$$
We identify this arc with its Puiseux series $\gamma$ and put
$h=\ord_y f(\gamma(y),y)$.  We regard distinct Puiseux conjugates as distinct polar arcs.

The \emph{canyon degree} $d=d(\gamma)$ is the smallest rational number $q\ge1$
such that, for generic $z\in\C$,
\begin{equation}\label{eq_degree}
 \ord_y\|\nabla f(\gamma(y)+zy^q,y)\|
 =\ord_y\|\nabla f(\gamma(y),y)\|.
\end{equation}
The \emph{gradient canyon} of $\gamma$ consists of the Puiseux arcs
$\alpha(y)=\gamma(y)+O(y^d)$.

A polar arc is called \emph{tangential} if its tangent line is contained in
the tangent cone of $V(f) :=\{f=0\}$. Thus, if $\gamma(y)=a y+o(y)$, it is tangential
exactly when $f_m(a,1)=0$.
For such an arc, $h>m$. Since $\gamma$ is polar, the chain rule gives
$$
 \frac{d}{dy}f(\gamma(y),y)=f_y(\gamma(y),y),\quad
 \ord_y\|\nabla f(\gamma(y),y)\|=h-1>m-1.
$$
On the other hand, the gradient along $(\gamma(y)+zy,y)$ has order $m-1$
for generic $z$. Hence, \eqref{eq_degree} does not hold at $q=1$, and $d(\gamma)>1.$

From now on, $\gamma$ is tangential. After the linear change of coordinates $x\mapsto x+ay$,
we may assume $\gamma(y)=o(y)$, and hence $\gamma'(y)=o(1)$. This coordinate change preserves
the polar equation and mini-regularity.

For a rational number $q\ge1$, define $H_f(q)$ and the nonzero polynomial $P_{f,\gamma,q}\in\C[z]$ by
\begin{equation}\label{eq_transverse}
 f(\gamma(y)+zy^q,y)-f(\gamma(y),y)
 =y^{H_f(q)}\bigl(P_{f,\gamma,q}(z)+o(1)\bigr), 
\end{equation}
where $o(1)\to0$ as $y\to0$ locally uniformly in $z\in\C$. We call $P_{f,\gamma,q}$ the \emph{transverse polynomial at scale $q$}.

Let us go a bit further.  For fixed $y$, around  $x = \gamma(y)$, 
\begin{equation}
     f(\gamma(y) + X, y) - f(\gamma(y), y) =  f_x (\gamma(y), y) X + \frac{1}{2} f_{xx}(\gamma(y), y) X^2 + \cdots
\end{equation}
Since $\gamma$ is a polar arc, $f_x (\gamma(y), y) = 0$. Thus, 

\begin{equation}\label{eq_taylor}
f(\gamma(y)+X,y)
 =a(y)+\sum_{j\ge2}a_j(y)X^j,
\end{equation}
where $a(y):= f(\gamma(y),y) \text{ and } \ord_y a=h$.
If
$a_j\not\equiv0$, write
$$
 a_j(y)=c_jy^{\nu_j}+\hot,\quad c_j\ne0.
$$
Substituting $X=zy^q$ into \eqref{eq_taylor} gives
\begin{equation}\label{eq_minimum}
 H_f(q)=\min_{j\geq 2}(\nu_j+jq),\quad
 P_{f,\gamma,q}(z)=\sum_{\nu_j+jq=H_f(q)}c_jz^j.
\end{equation}
In particular, $\ord_z(P_{f,\gamma,q})\geq 2$. 

Now, by differentiating in $X$, we have
$$
 \ord_y f_x(\gamma(y)+zy^q,y)=H_f(q)-q
 \quad\text{for generic }z.
$$
Differentiating 
\eqref{eq_taylor} in $y$ gives 
\begin{equation}
 f_y(\gamma(y)+X,y)
 =a'(y)+\sum_{j\ge2}a'_j(y)X^j
       -\gamma'(y)\sum_{j\ge2}j a_j(y)X^{j-1}.
\end{equation}
For $|X|\le C|y|^q$, the two last sums on the right have sizes
$O(|y|^{H_f(q)-1})$ and $O(|y|^{H_f(q)-q})$, respectively. Since $q>1$ and $\gamma'(y)=o(1)$, we obtain
\begin{equation}\label{eq_fy-expansion}
 f_y(\gamma(y)+X,y)
 =a'(y)+o(|y|^{H_f(q)-q}).
\end{equation}
Together with the generic order of $f_x$, we have 
\begin{equation}
 \ord_y\|\nabla f(\gamma(y)+zy^q,y)\|
 =\min\{h-1,H_f(q)-q\}
 \quad\text{for generic }z.
\end{equation}
Now set 
$$ L_f(q):= H_f(q)-q=\min_j(\nu_j+(j-1)q).$$ 
It is obvious that $L_f(q)$ is continuous and strictly
increasing.  From definition of canyon degree,  $L_f(d)\ge h-1$ where $d:= d(\gamma)$. Continuity and strict monotonicity imply
\begin{equation}\label{eq_L(d)}
    L_f(q)<h-1\quad(1<q<d) \quad\text{and} \quad  L_f(d)=h-1.
\end{equation}
Equivalently,
$$
 H_f(q)<h+q-1\quad(1<q<d) \quad\text{and} \quad
 H_f(d)=h+d-1.
$$
These explicit formulas for $H_f(q)$ and $L_f(q)$ will be used later.

\subsection{How the transverse polynomial changes with the scale}\label{section2.1}
Consider the Puiseux factorization: 
\begin{equation}\label{eq_Puiseux-fac}
    f_x(x, y) = u(x, y) \prod_{j =  0}^r (x - \gamma_j(y))^{m_j}, \quad u(0, 0)\neq 0,
\end{equation}
where 
$\gamma_0,\ldots,\gamma_r$ are polar arcs. Fix a polar arc $\gamma = \gamma_i$. Without loss of generality, we may assume that  $i  = 0$. For $j \geq 1$,  write
$$
\gamma_j(y)-\gamma(y)
=
a_jy^{\delta_j}+\text{h.o.t.},
\quad
a_j\neq 0,
$$
where
$\delta_j=\operatorname{ord}_y(\gamma_j-\gamma)
$.

\begin{proposition}\label{prop_variation_scale}
Let $q>1$ be a rational number. Then
$$
P'_{f,\gamma,q}(z)
=
C_q z^{M_q}
\prod_{\delta_j=q}(z-a_j)^{m_j},
\quad
C_q\neq0,
$$
where
$
M_q
=
m_0+\sum_{\delta_j>q}m_j
$.

Consequently, if $q$ is not equal to any contact order $\delta_j$, then
$$
P_{f,\gamma,q}(z)
=
c_q z^{M_q+1},
\quad
c_q\neq0.
$$
\end{proposition}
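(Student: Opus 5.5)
The plan is to compare the leading term of $f_x$ along $x=\gamma(y)+zy^q$, computed in two ways: once from the Taylor expansion \eqref{eq_taylor}, and once from the Puiseux factorization \eqref{eq_Puiseux-fac}.

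\emph{First computation.} Differentiate \eqref{eq_taylor} in $X$ and substitute $X=zy^q$. This gives
$$
f_x(\gamma(y)+zy^q,y)=\sum_{j\ge2}j\,a_j(y)\,z^{j-1}y^{(j-1)q}
=y^{H_f(q)-q}\bigl(P'_{f,\gamma,q}(z)+o(1)\bigr),
$$
locally uniformly in $z$. This follows from \eqref{eq_minimum}: the terms with $\nu_j+jq=H_f(q)$ produce exactly $\sum jc_jz^{j-1}=P'_{f,\gamma,q}(z)$. All other terms have strictly larger order, uniformly for $z$ in compact sets. That only finitely many $j$ matter is routine, since $\nu_j+jq\to\infty$. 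Because $\ord_zP\ge2$ and $P\ne0$, the derivative $P'$ is a nonzero polynomial.

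\emph{Second computation.} Substitute $x=\gamma(y)+zy^q$ into \eqref{eq_Puiseux-fac}. The factor $u$ tends to $u(0,0)\ne0$. The factor for $j=0$ is $(zy^q)^{m_0}$. For $j\ge1$ we have
$$
x-\gamma_j(y)=zy^q-a_jy^{\delta_j}-\hot
$$
and we split into three cases:
\begin{itemize}
\item if $\delta_j>q$, the factor equals $y^q(z+o(1))$;
\item if $\delta_j=q$, it equals $y^q(z-a_j+o(1))$;
\item if $\delta_j<q$, it equals $y^{\delta_j}(-a_j+o(1))$.
\end{itemize}
Multiplying, we get
$$
f_x(\gamma(y)+zy^q,y)=y^{E}\Bigl(C_q\,z^{M_q}\prod_{\delta_j=q}(z-a_j)^{m_j}+o(1)\Bigr),
$$
where $C_q=u(0,0)\prod_{\delta_j<q}(-a_j)^{m_j}\ne0$, $E=q\sum_{\delta_j\ge q}m_j+q\,m_0+\sum_{\delta_j<q}m_j\delta_j$, and the $o(1)$ is again locally uniform in $z$. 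Some care is needed with Puiseux exponents, since $q$ is rational and the $\gamma_j$ are fractional series. Each factor is still of the stated form in $y^{1/N}$ for a common denominator $N$, so this causes no difficulty.

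\emph{Comparing the two.} Both expressions are of the form $y^{e}(Q(z)+o(1))$ with $Q$ a nonzero polynomial. Evaluating at a generic $z$ where both polynomials are nonzero forces $E=H_f(q)-q$, and then the two leading polynomials coincide. This gives the formula for $P'_{f,\gamma,q}$.

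For the consequence: if $q\ne\delta_j$ for all $j$, the product over $\delta_j=q$ is empty, so $P'=C_qz^{M_q}$. Integrating, and using that $P$ has no constant term because $\ord_zP\ge2$, we obtain $P=\frac{C_q}{M_q+1}z^{M_q+1}$.

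The main obstacle is the uniformity bookkeeping. We must make sure that the $o(1)$ remainders in both expansions are locally uniform in $z$, so that comparing the polynomials is legitimate and not just a comparison of orders for a single generic $z$. Both expansions come from finitely many dominant terms plus convergent tails that are bounded for $|z|\le R$, which handles this.
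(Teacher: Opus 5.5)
Your proof is correct and follows the paper's argument. Both compute the leading term of $f_x(\gamma(y)+zy^q,y)$ from the Puiseux factorization, using the same three-case analysis, which yields $C_q$ and the exponent $qm_0+\sum_j m_j\min\{q,\delta_j\}$; both match it against $y^{H_f(q)-q}\bigl(P'_{f,\gamma,q}(z)+o(1)\bigr)$ and then integrate using $P_{f,\gamma,q}(0)=0$. The only difference is cosmetic: you obtain that second expansion by differentiating the exact Taylor series \eqref{eq_taylor} in $X$, whereas the paper differentiates the asymptotic identity \eqref{eq_transverse} in $z$, so your route avoids justifying differentiation of the $o(1)$ remainder.
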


\begin{proof}
Translate the first variable in \eqref{eq_Puiseux-fac} by putting $
x=\gamma(y)+X$. We have
$$
f_x(\gamma(y)+X,y)
=
U(X,y)
X^{m_0}
\prod_{j=1}^{r}
\bigl(X-(\gamma_j(y)-\gamma(y))\bigr)^{m_j},
$$
where $U(X,y)
=u(\gamma(y)+X,y).$ Now put $X=zy^q$. For each $j\geq1$, there are three possible cases. 

If $\delta_j>q$, then
$$
zy^q-(\gamma_j(y)-\gamma(y))
=
y^q\bigl(z+o(1)\bigr).
$$
Hence, this branch contributes the factor $z^{m_j}$ to the leading
polynomial in $z$.

If $\delta_j=q$, then
$$
zy^q-(\gamma_j(y)-\gamma(y))
=
y^q\bigl(z-a_j+o(1)\bigr).
$$
This branch contributes the factor $(z-a_j)^{m_j}$.

Finally, if $\delta_j<q$, then
$$
zy^q-(\gamma_j(y)-\gamma(y))
=
-a_jy^{\delta_j}(1+o(1)).
$$
This branch contributes only a nonzero constant. 

Therefore, 
$$
f_x(\gamma(y)+zy^q,y)
=
y^{L(q)}
\left(
C_q
z^{M_q}
\prod_{\delta_j=q}(z-a_j)^{m_j}
+
o(1)
\right),
$$
where $C_q\neq0$, $
M_q :=m_0+\sum_{\delta_j>q}m_j, L(q):=qm_0
+\sum_{j=1}^{N} m_j\min\{q,\delta_j\}
$.

By definition, 
$$
f(\gamma(y)+zy^q,y)-f(\gamma(y),y)
=
y^{H_f(q)}
\bigl(P_{f,\gamma,q}(z)+o(1)\bigr).
$$
Differentiating with respect to $z$ gives
$$
y^q f_x(\gamma(y)+zy^q,y)
=
y^{H_f(q)}
\bigl(P'_{f,\gamma,q}(z)+o(1)\bigr).
$$
Equivalently,
$$
f_x(\gamma(y)+zy^q,y)
=
y^{H_f(q)-q}
\bigl(P'_{f,\gamma,q}(z)+o(1)\bigr).
$$
This yields
$$
H_f(q)-q=L(q) \quad \text{ and } P'_{f,\gamma,q}(z)
=
C_q
z^{M_q}
\prod_{\delta_j=q}(z-a_j)^{m_j}.
$$
If $q\neq \delta_j$, then we obtain
$P'_{f,\gamma,q}(z)=C_qz^{M_q}$. Since $P_{f,\gamma,q}(0)=0$, 
integration gives
$P_{f,\gamma,q}(z)=\frac{C_q}{M_q+1}z^{M_q+1}$.
\end{proof}

\begin{remark}\label{rem_2} It follows directly from Proposition \ref{prop_variation_scale} that  on every open interval containing no contact order, $P_{f, \gamma, q}(z)$ is a single monomial. At a contact scale $q=\delta_j$, the corresponding polar branch appears at finite position $z=a_j$, and
$P'_{f,\gamma,q}$ contains the factor $(z-a_j)^{m_j}$. Thus, the transverse polynomial  $P_{f, \gamma, q}(z)$ changes only when $q$ reaches
the contact order of another polar branch.
\end{remark}

\subsection{Bi-Lipschitz invariance of transverse polynomial}\label{section2.2}

We will use the following correspondence of gradient canyons \cite[Lemmas~3.1--3.2]{MPT25}, based on \cite[Theorem~5.8]{PT19}.

\begin{proposition}\label{prop_matching}
Let $f=g\circ\varphi$, where $\varphi=(\varphi_1,\varphi_2)$ is a bi-Lipschitz
homeomorphism germ, and let $\gamma$ be a tangential polar arc of $f$ of canyon
degree $d>1$. There is a polar arc $\beta$ of $g$ in the corresponding canyon
with the same canyon degree such that
\begin{equation}\label{eq_matching}
 \varphi(\gamma(y),y)
 =\bigl(\beta(Y(y))+O(|Y(y)|^d),\,Y(y)\bigr),
 \quad Y(y)=Ay+o(y),\quad A\in\C^*.
\end{equation}
Moreover,
$$
 \ord_y f(\gamma(y),y)=\ord_Y g(\beta(Y),Y)=h.
$$
The coordinates may be chosen so that both germs are mini-regular in their first variable and $\gamma(y)=o(y)$, $\beta(Y)=o(Y)$.
\end{proposition}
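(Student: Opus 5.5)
Since the statement is quoted from \cite[Lemmas~3.1--3.2]{MPT25} (resting on \cite[Theorem~5.8]{PT19}), the plan is to assemble it from those results and supply only the coordinate bookkeeping.

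\textbf{Step 1: canyon correspondence.} Invoke \cite[Theorem~5.8]{PT19}: a bi-Lipschitz homeomorphism $\varphi$ with $f=g\circ\varphi$ sends the gradient canyon of $\gamma$, i.e.\ the horn neighbourhood $\{|x-\gamma(y)|\le C|y|^d\}$, into a gradient canyon of $g$. The target canyon has the same degree $d$ and the same multiplicity. The mechanism is that $\|\nabla f\|$ and $\|\nabla g\circ\varphi\|$ are comparable along arcs, up to the Lipschitz constants. Indeed, $f=g\circ\varphi$ forces $|f|$ and the \L ojasiewicz-type growth of the gradient near polar arcs to be preserved. The canyon degree is read off as the smallest horn exponent on which $\ord\|\nabla f\|$ is constant, and this is a metric invariant.

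\textbf{Step 2: the explicit parametrization \eqref{eq_matching}.} The image curve $\varphi(\gamma(y),y)$ lies in the canyon of some polar arc $\beta$ of $g$. Writing its second coordinate as $Y(y)$, bi-Lipschitzness gives $|Y(y)|\asymp|y|$, so $\ord_y Y=1$. This is because the image arc is not tangent to the $x$-axis, by mini-regularity and the tangency of the canyon to the tangent cone. One then needs $Y$ to be a Puiseux series with nonzero leading coefficient $A$, i.e.\ $Y=Ay+o(y)$. Being in the canyon of $\beta$ means exactly that the first coordinate equals $\beta(Y)+O(|Y|^d)$.

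\textbf{Step 3: equality of $h$.} We have $f(\gamma(y),y)=g(\varphi(\gamma(y),y))$. Moreover $g$ is nearly constant along transverse segments of length $O(|Y|^d)$ inside the canyon, since $H_g(d)=h_g+d-1>h_g$ by \eqref{eq_L(d)}. Hence $g(\beta(Y)+O(|Y|^d),Y)=g(\beta(Y),Y)(1+o(1))$. Thus $\ord_y f(\gamma(y),y)=\ord_Y g(\beta(Y),Y)$, using $Y\sim Ay$.

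\textbf{Step 4: coordinates.} Apply the linear shears $x\mapsto x-ay$ on both sides, as in Section~\ref{section2}. These keep mini-regularity, the polar equations and \eqref{eq_matching}, with $\varphi$ modified by linear maps.

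\textbf{Main obstacle.} The main obstacle is Step 2, the regularity of the image arc. A bi-Lipschitz map need not send analytic arcs to Puiseux arcs, so $Y(y)$ is a priori only a continuous function with $|Y|\asymp|y|$. I would first follow \cite{MPT25}: work with the Puiseux arc $\beta$ and show that the leading behaviour of $Y$ is forced. For instance, compare $f(\gamma(y),y)=c\,y^h+\dots$ with $g(\beta(Y),Y)=c'Y^h+\dots$ to obtain $Y^h\sim (c/c')y^h$. This pins down $Y=Ay(1+o(1))$ for a fixed $h$-th root $A$, after possibly replacing $\beta$ by a Puiseux conjugate, and that conjugate is what "$\beta$ in the corresponding canyon" means.
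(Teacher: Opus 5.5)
Your proposal takes the same route as the paper, which gives no separate proof and simply imports the statement from \cite[Lemmas~3.1--3.2]{MPT25} and \cite[Theorem~5.8]{PT19}; your added bookkeeping is consistent with that. This covers the shears, $|Y|\asymp|y|$ via mini-regularity, $h_f=h_g$ via $H_g(d)=h+d-1>h$, and $A$ obtained from $(Y/y)^h\to c/c'$. Two small points to tighten: equality of $h$ needs only $|Y|\asymp|y|$, not $Y\sim Ay$, so there is no circularity; and $Y(t)/t$ converges to a single root because its cluster set as $t\to0^+$ is connected and contained in a finite set.
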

We call the polar arc $\beta$  in Proposition \ref{prop_matching} \textit{the matching polar} of $\gamma$.  Our main result is as follows. 
\begin{theorem}[Transformation law] \label{thm_main}
Let $f=g\circ\varphi$, and let $\gamma,\beta$ be matched polar arcs as in Proposition 
\ref{prop_matching}. For every rational $q$ with $1<q<d$,
\begin{equation}\label{eq:exponent-invariance}
 H_f(q)=H_g(q)=:H(q).
\end{equation}
For the same constant $A$ as in \eqref{eq_matching}, there exists
$B_q\in\C^*$ such that
\begin{equation}\label{eq_transformation_law}
 P_{f,\gamma,q}(z)=A^{H(q)}P_{g,\beta,q}(B_qz)
 \quad\text{for all }z\in\C.
\end{equation}
Here, $A$ is independent of $q$ while $B_q$ may depend on $q$.

In particular, by the continuity, $H_f(q) = H_g(q)$ for all $1\leq q \leq d$. 
\end{theorem}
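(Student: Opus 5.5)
The plan is to rescale around the polar arc at scale $q$ and pass to a limit. For small $y$, define the rescaled maps
$$
F_y(z)=y^{-H_f(q)}\bigl(f(\gamma(y)+zy^q,y)-f(\gamma(y),y)\bigr),\qquad G_Y(w)=Y^{-H_g(q)}\bigl(g(\beta(Y)+wY^q,Y)-g(\beta(Y),Y)\bigr).
$$
By \eqref{eq_transverse} these converge locally uniformly to $P_{f,\gamma,q}$ and $P_{g,\beta,q}$, respectively. Next we transport the transverse disc $\{(\gamma(y)+zy^q,y):|z|\le R\}$ by $\varphi$. Write $\varphi(\gamma(y)+zy^q,y)=(\varphi_1,\varphi_2)$. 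Since $\varphi$ is Lipschitz, $\varphi_2=Y(y)+O(|y|^q)$ and $\varphi_1=\beta(Y(y))+O(|y|^q)$; here we use \eqref{eq_matching}, the bound $d>q$, and the fact that $\beta$ has Lipschitz derivative bounded by $\beta'=o(1)$. Set $Y_z:=\varphi_2$ and $w_y(z):=(\varphi_1-\beta(Y_z))/Y_z^q$. Then $w_y$ is uniformly bounded on $|z|\le R$. By bi-Lipschitz, $|w_y(z)-w_y(z')|\asymp|z-z'|$ up to errors, once we know the second coordinate shift does not destroy the estimate. So $w_y$ is uniformly bi-Lipschitz onto its image modulo $o(1)$.

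The main obstacle is handling the second coordinate: $Y_z$ depends on $z$, while $G_Y$ is computed along the fixed slice $Y$. I would first bound $g(\beta(Y_z)+wY_z^q,Y_z)-g(\beta(Y(y))+wY(y)^q,Y(y))$. For this I use $|Y_z-Y(y)|=O(|y|^q)$ together with \eqref{eq_fy-expansion} applied to $g$. That expansion says $g_y=a_g'(Y)+o(|Y|^{H_g(q)-q})$ on the horn, so the change is $a_g'(Y)\,(Y_z-Y)+o(|y|^{H_g(q)})$. This main term is exactly compensated: subtracting the value at $w=0$ (i.e. $g(\beta(Y_z),Y_z)$) turns the base value into $a_g(Y_z)$ rather than $a_g(Y)$. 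I would therefore compare $f(\gamma+zy^q,y)-f(\gamma,y)=g(\varphi(\cdot))-g(\varphi(\gamma,y))$, and also use that $\varphi(\gamma(y),y)$ lies within $O(|Y|^d)=o(|Y|^q)$ of $\beta$. The outcome is
$$
F_y(z)=y^{H_g(q)-H_f(q)}\,\frac{Y(y)^{H_g(q)}}{y^{H_g(q)}}\bigl(G_{Y(y)}(w_y(z))+o(1)\bigr).
$$
Here I use the inequality $H_g(q)<h+q-1$ from \eqref{eq_L(d)}. This inequality is exactly where $q<d$ is used: it makes the $a_g'$ error term $o(y^{H_g(q)})$ relative to $a_g(Y)$ cancellations, and likewise the $O(|Y|^d)$ displacement of $\varphi(\gamma)$ contributes $|Y|^{d}\cdot|Y|^{L_g}$-type errors, which are negligible.

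Now take a subsequence $y\to0$ along which $w_y\to w_\infty$ uniformly on compacts (Arzelà–Ascoli). The limit $w_\infty$ is bi-Lipschitz, with $w_\infty(0)=0$. Comparing growth forces $H_f(q)=H_g(q)$. Indeed, if $H_f(q)<H_g(q)$, the left side tends to the nonzero $P_f$ while the right tends to $0$; the reverse inequality is excluded by symmetry using $\varphi^{-1}$. With equal exponents and $Y/y\to A$, we get $P_{f,\gamma,q}(z)=A^{H(q)}P_{g,\beta,q}(w_\infty(z))$. Finally, $w_\infty$ is a bi-Lipschitz homeomorphism of $\C$ satisfying $P_f=c\,P_g\circ w_\infty$ with $P_f,P_g$ nonconstant polynomials. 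The affine rigidity of Câmara–Fernandes [CF25, Prop.~3.1] then makes $w_\infty$ affine, $w_\infty(z)=B_qz+b$, and $b=0$ since $w_\infty(0)=0$. This gives \eqref{eq_transformation_law}. One caveat is that a complex-conjugate-linear $w_\infty$ would have to be excluded, and holomorphy of $P_f$ does this. The last claim, $H_f=H_g$ on $[1,d]$, follows by continuity of the piecewise-affine functions $H_f,H_g$.
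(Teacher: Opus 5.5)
Your proposal follows essentially the paper's route. You rescale at scale $q$ around $\gamma$ and pass to a limit by Arzel\`a--Ascoli. You absorb the longitudinal shift $Y_z-Y(y)=O(|y|^q)$ using $H_g(q)<h+q-1$; this is exactly \eqref{eq_longitudinal}. There is no ``compensation'' there: you subtract $g$ at height $Y(y)$, not $Y_z$. The term $a_g(Y_z)-a_g(Y(y))=O(|y|^{h+q-1})$ is simply $o(|y|^{H_g})$, as you then note. You then compare exponents and finish with \cite[Proposition~3.1]{CF25}. Your limit $w_\infty$ is precisely the paper's $U=A^{-q}\Phi_1(\cdot,0)$.

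The one structural difference concerns excluding $H_f>H_g$. The paper rescales $\varphi$ in both variables, obtains a global bi-Lipschitz homeomorphism $\Phi$ of $\C^2$, and uses surjectivity of $\Phi_1$. You work only on the slice $w=0$ and run the argument again for $\varphi^{-1}$. That is legitimate: Lipschitzness of $\varphi^{-1}$ puts $\varphi^{-1}(\beta(Y),Y)$ within $O(|Y|^d)=o(|Y|^q)$ of $\gamma$, and the same mechanism that disposes of $e_t$ applies. The symmetry really is needed, since on the slice alone $H_f>H_g$ only yields $w_\infty\equiv0$, which is no contradiction.

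One step is unjustified: the claim that $w_y$ is uniformly bi-Lipschitz and that $w_\infty$ is a bi-Lipschitz homeomorphism of $\C$. Only the upper Lipschitz bound follows from your estimates. The lower bound $|\varphi(p_z)-\varphi(p_{z'})|\ge c|y|^q|z-z'|$ could be realized entirely by the second coordinate $Y_z-Y_{z'}$, which is of the same order $|y|^q$. Nothing in your argument rules this out before the conclusion is known.

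You do not need it. The paper applies \cite[Proposition~3.1]{CF25} to a merely Lipschitz $U$ with $U(0)=0$. You can also see it directly. Suppose $U$ is continuous and $P_{f,\gamma,q}=A^{H}P_{g,\beta,q}\circ U$. Near any $z_0$ where $U(z_0)$ is not a critical point of $P_{g,\beta,q}$, you get $U=\sigma\circ(A^{-H}P_{f,\gamma,q})$ for a local holomorphic inverse $\sigma$. The exceptional $z_0$ lie in the preimage under the nonconstant $P_{f,\gamma,q}$ of a finite set, so $U$ is holomorphic off a finite set. Being continuous, $U$ is then entire, and an entire Lipschitz map is affine by Liouville. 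Finally $B_q\neq0$ because $P_{f,\gamma,q}\not\equiv0$. This also settles your conjugate-linear caveat. With the bi-Lipschitz claim replaced by the Lipschitz bound and this observation, your proof is complete.
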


\begin{proof}

Fix $1<q<d$ and write $H_f=H_f(q)$ and $H_g=H_g(q)$.

If
$$
 |x-\gamma(y)|\le C|y|^q,\quad |v-y|\le C|y|^q,
$$
then, by
integrating \eqref{eq_fy-expansion}  we have 
\begin{equation}\label{eq_longitudinal}
 f(x,v)-f(x,y)
 =O(|y|^{h+q-1})+o(|y|^{H_f})
 =o(|y|^{H_f}).
\end{equation}
The corresponding estimate also holds for $g$.

For $t>0$ tending to $0$, we define
\begin{equation}\label{eq_rescaling}
 \Phi_t(z,w)=
 \frac{\varphi(\gamma(t)+t^qz,t+t^qw)-\varphi(\gamma(t),t)}{t^q}.
\end{equation}
The maps $\Phi_t$ fix the origin and have a common bi-Lipschitz constant.
Their domains, and those of their inverses, contain every fixed compact
subset of $\C^2$ once $t$ is sufficiently small.  Arzel\`a--Ascoli theorem, applied
to the maps and their inverses, 
gives a sequence $t_n \to 0$ and  a global bi-Lipschitz homeomorphism
$\Phi:\C^2\to\C^2$ such that $\Phi_{t_n}\to\Phi$ locally uniformly and
$\Phi(0)=0$.

For the target, Proposition \ref{prop_matching} gives 
$$
 \varphi(\gamma(t),t)=(\beta(Y_t)+e_t,Y_t),\quad
 Y_t/t\longrightarrow A,\quad e_t=O(t^d).
$$

The estimate \eqref{eq_longitudinal} together with the defining transverse
expansion gives
\begin{equation}\label{eq_source-limit}
 \frac{f(\gamma(t)+t^qz,t+t^qw)-f(\gamma(t),t)}{t^{H_f}}
 \longrightarrow P_{f,\gamma,q}(z)
\end{equation}
locally uniformly in $(z,w)$.

Because $q<d$, we have $e_t=o(t^q)$. Then, 
\[
 \begin{split}
 &g(\beta(Y_t)+e_t+t^qu,Y_t)-g(\beta(Y_t)+e_t,Y_t)\\
 &\quad=Y_t^{H_g}\left[
 P_{g,\beta,q}\!\left(\frac{e_t+t^qu}{Y_t^q}\right)
 -P_{g,\beta,q}\!\left(\frac{e_t}{Y_t^q}\right)+o(1)\right].
 \end{split}
\]

Since $e_t/Y_t^q\to0$ and $P_{g,\beta,q}(0)=0$, we have
\begin{equation}\label{eq_target-limit}
 \begin{split}
 &\frac{g(\beta(Y_t)+e_t+t^qu,Y_t+t^qv)
             -g(\beta(Y_t)+e_t,Y_t)}{t^{H_g}}\\
 &\hspace{25mm}\longrightarrow
 A^{H_g}P_{g,\beta,q}(A^{-q}u),
 \end{split}
\end{equation}
locally uniformly in $(u,v)$.

Let
$$
 p_t=(\gamma(t),t),\quad
 p_t(z,w)=(\gamma(t)+t^qz,t+t^qw).
$$
Writing $\Phi_t=(\Phi_{t,1},\Phi_{t,2})$, the definition \eqref{eq_rescaling} gives
$$
 \varphi(p_t(z,w))
 =
 \bigl(
 \beta(Y_t)+e_t+t^q\Phi_{t,1}(z,w),
 Y_t+t^q\Phi_{t,2}(z,w)
 \bigr).
$$
Since $f=g\circ\varphi$, we have 
$$
 f(p_t(z,w))-f(p_t)
 =
 g(\varphi(p_t(z,w)))-g(\varphi(p_t)).
$$
Combining \eqref{eq_source-limit} and \eqref{eq_target-limit} (replace $(u, v)$ by $(\Phi_{t,1}, \Phi_{t,2})$) yields

\begin{equation}\label{eq_comparison}
    t^{H_f}\bigl(P_{f,\gamma,q}(z)+o(1)\bigr)
 =
 t^{H_g}\left(
 A^{H_g}P_{g,\beta,q}
 \bigl(A^{-q}\Phi_{t,1}(z,w)\bigr)+o(1)
 \right),
\end{equation}
where the remainders are locally uniform in $(z,w)$.

If $H_f<H_g$, division by $t^{H_f}$ and passing the limit would
gives $P_{f,\gamma,q}\equiv0$. If $H_f>H_g$, division by $t^{H_g}$ and taking the limit as $t_n\to 0$ gives
$$
 P_{g,\beta,q}(A^{-q}\Phi_1(z,w))=0
 \quad\text{for all }(z,w)\in\C^2.
$$
But $\Phi$ is onto, so its first component takes every value in $\C$;
this would force $P_{g,\beta,q}\equiv0$. Both are impossible.
Therefore $H_f=H_g=:H(q)$, and \eqref{eq_comparison} becomes
$$
 P_{f,\gamma,q}(z)
 =A^{H(q)}P_{g,\beta,q}(A^{-q}\Phi_1(z,w)).
$$
Set $U(z):=A^{-q}\Phi_1(z,0)$. This is a globally Lipschitz
map $\C\to\C$ with $U(0)=0$, and
$$
P_{f,\gamma,q}=A^{H(q)}P_{g,\beta,q}\circ U.
$$
By \cite[Proposition~3.1]{CF25}, $U$ must be affine. Thus,
$U(z)=B_qz$ for some $B_q\ne0$. This proves  \eqref{eq_transformation_law}. 
\end{proof}

\begin{corollary}\label{Cor_main}
Under the assumptions of Theorem \ref{thm_main}, suppose that, for some
$1<q<d$,
$$
P_{f,\gamma,q}(z)
=
A_2z^2+A_3z^3+A_4z^4,
\quad
A_2A_4\neq0.
$$
Then,
$$
J(P_{f,\gamma,q})
:=
\frac{A_3^2}{A_2A_4}
$$
is preserved under the matching of polar arcs.
\end{corollary}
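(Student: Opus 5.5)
The plan is to apply the transformation law \eqref{eq_transformation_law} of Theorem~\ref{thm_main} at the given scale $q$ and read off how the coefficients change. By Theorem~\ref{thm_main} we have $P_{f,\gamma,q}(z)=A^{H(q)}P_{g,\beta,q}(B_qz)$ with $A,B_q\in\C^*$. In particular, $P_{g,\beta,q}(w)=A^{-H(q)}P_{f,\gamma,q}(B_q^{-1}w)$, so $P_{g,\beta,q}$ has the same support as $P_{f,\gamma,q}$. Hence it also has the form
$$P_{g,\beta,q}(w)=A_2'w^2+A_3'w^3+A_4'w^4,$$
with $A_2'A_4'\neq0$, and $J(P_{g,\beta,q})$ is defined.

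Next compare coefficients. Write $\lambda:=A^{H(q)}$ and $B:=B_q$. Then
$$A_k=\lambda B^kA_k',\qquad k=2,3,4.$$
It follows that
$$\frac{A_3^2}{A_2A_4}=\frac{\lambda^2B^6A_3'^2}{\lambda^2B^6A_2'A_4'}=\frac{A_3'^2}{A_2'A_4'}.$$
So $J(P_{f,\gamma,q})=J(P_{g,\beta,q})$. The point is that $J$ is invariant under both the target scaling $P\mapsto\lambda P$ and the source scaling $z\mapsto Bz$: numerator and denominator are both homogeneous of degree $2$ in $\lambda$ and of total weight $6$ in $B$.

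There is no real obstacle here. The only points to check are that $q$ lies in the open range $1<q<d$, which is where Theorem~\ref{thm_main} applies, and that $A_2'A_4'\ne0$ is inherited, which was shown above. The meaning of ``preserved under the matching'' is that the number $J$ attached to $(f,\gamma,q)$ equals the one attached to $(g,\beta,q)$. This is well defined because it does not depend on the non-canonical constants $A$ and $B_q$.
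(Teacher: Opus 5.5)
Your proposal is correct and follows the paper's own proof: you write $P_{f,\gamma,q}(z)=\lambda P_{g,\beta,q}(Bz)$ using Theorem~\ref{thm_main}, compare coefficients to get $A_k=\lambda B^kA_k'$, and observe that $\lambda$ and $B$ cancel in $A_3^2/(A_2A_4)$. You also check explicitly that $P_{g,\beta,q}$ has the same support, so that $A_2'A_4'\neq0$; the paper only asserts this.
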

\begin{proof}
By Theorem \ref{thm_main}, $
P_{f,\gamma,q}(z)
= a\,P_{g,\beta,q}(bz)$
for some $a,b\in\C^*$. It follows that 
$$
P_{g,\beta,q}(z)=B_2z^2+B_3z^3+B_4z^4,
$$
and 
$$
A_j=ab^jB_j,\qquad j=2,3,4,
$$
Therefore,
$$
\frac{A_3^2}{A_2A_4}
=
\frac{a^2b^6B_3^2}
{(ab^2B_2)(ab^4B_4)}
=
\frac{B_3^2}{B_2B_4}.
$$
\end{proof}

\begin{remark}\label{rem_1} The restriction $1 < q < d$ in Theorem~\ref{thm_main} is sharp: as shown in the examples below, the transformation law fails both at scale $q = 1$ and at scale $q = d$ (even for analytic right equivalence).

\begin{example}\label{ex_1} Consider $f(x, y) = x^2 (x-y) (x-2y) + y^6$, $\phi(x, y) = (x, y-x)$, and $g = f\circ \phi$. Clearly, $f$ and $g$ are analytically right equivalent, and hence bi-Lipschitz right equivalent. 

First, we have
$$ f_x(x, y) = x (4x^2 -9xy + 4y^2),$$
$$g(x, y) = x^2(2x-y)(3x-2y)+(y-x)^6$$
and 
$$g_x(x, y) = x(24x^2 - 21xy + 4y^2) - 6(y-x)^5.$$
Hence, polar arcs of $f$ and $g$ respectively are: $$\gamma_0(y) = 0, \quad \gamma_{1,2}(y) = \frac{9\pm\sqrt{17}}{8}y,$$
and
$$\beta_0(y) = 0 + o(y), \beta_{1,2}(y) = \frac{21\pm \sqrt{57}}{48}y + o(y).$$  

At the scale $q = 1$, 
 $$
f(\gamma_0(y) +zy,y)-f(\gamma_0(y),y)=f(zy,y)-f(0,y)
=
y^4 z^2(z-1)(z-2),
$$
so
$$
P_{f,\gamma_0,1}(z)
= z^2(z-1)(z-2)= z^4-3z^3+2z^2.
$$
Similar computation gives
$$ P_{f,\gamma_1,1}(z) = z^4 + \frac{ 3+\sqrt{17}}{2} z^3 + \frac{17+9\sqrt{17}}{16} z^2$$
and
$$ P_{f,\gamma_2,1}(z) = z^4 + \frac{ 3-\sqrt{17}}{2} z^3 + \frac{17-9\sqrt{17}}{16} z^2,$$
and 
$$ P_{g,\beta_0,1}(z) = 6z^4 - 7z^3 + 2 z^2.$$
It follows that 
$$ J(P_{f,\gamma_0,1}(z)) =\frac{9}{2} , \quad J(P_{f,\gamma_1,1}(z)) = \frac{104 +24\sqrt{17}}{17 +9\sqrt{17}}  , \quad J(P_{f,\gamma_2,1}(z)) = \frac{104 -24\sqrt{17}}{17 -9\sqrt{17}} $$
and 
$$ J(P_{g,\beta_0,1}(z)) = \frac{49}{12}.$$
Since $J(P_{g,\beta_0,1}(z))$ is different from the three $J$-values of $f$, the
transformation rule of Theorem~\ref{thm_main} fails at the scale  $q = 1$. 
\end{example}

\begin{example}\label{ex_2} Consider
$$
f(x,y)=x^4+x^2y^4+y^7
$$
and
$$
g(x,y)=x^4+x^2(y-\frac 67 x)^4+(y-\frac 67 x)^7.
$$
Then
$$
g=f\circ\phi,
\quad 
\phi(x,y)=(x,y-\frac 67 x),
$$
so $f$ and $g$ are analytically right equivalent , and hence
bi-Lipschitz right equivalent.

We first compute the transverse polynomials of all polar arcs of $f$
at the canyon degree. Since
$$
f_x=2x(2x^2+y^4),
$$
the polar arcs are
$$
\gamma_0(y)=0,
\quad
\gamma_i(y)=c_i y^2, 
\quad
c_i^2=-\frac12, i = 1, 2
$$
They all satisfy
$$
\ord_y f(\gamma_i(y),y)=7,
\quad
d(\gamma_i)=2.
$$
Put $R(w)=w^4+w^2$. The centered expansion is
$$
f((c+z)y^2,y)-f(cy^2,y)
=y^8\bigl(R(c+z)-R(c)\bigr).$$
Expanding gives
$$
R(c+z)-R(c)
=z^4+4cz^3+(6c^2+1)z^2+(4c^3+2c)z.
$$
The linear term vanishes because \(c\) is a critical point. Hence
$$
P_{f,\gamma,2}(z)=z^4+4cz^3+(6c^2+1)z^2.
$$
Consequently,
$$
J(P_{f,\gamma,2})
=\frac{16c^2}{6c^2+1}
=
\begin{cases}
0,&c=0,\\[2mm]
4,&c^2=-\frac12.
\end{cases}
$$

For $g$, we have
$$
g_x(x,y)
=
4x^3
+2x\left(y-\frac67x\right)^4
-\frac{24}{7}x^2\left(y-\frac67x\right)^3
-6\left(y-\frac67x\right)^6.
$$
Thus, $g$ has three polar arcs $$\beta_i = a_i y^2 + \hot, \quad i = 0, 1, 2$$
where $a_i$ are roots of the equation $4a^3 + 2a -6 = 0$.
Since $a=1$ is a simple root, there exists a polar arc $\beta_0(y) = y^2 + \hot$. It is easy to check that 
$$\ord_y g(\beta_0, y) = 7, \quad d(\beta_0) = 2.$$
Computation gives
$$ P_{g,\beta_0, 2}(z) = z^4 + 4z^3 + 7z^2$$
Thus, $J( P_{g,\beta_0, 2}(z)) = \frac{16}{7} \not \in \{ 0, 4\}$. By Corollary \ref{Cor_main}, the transformation law fails at the scale
$q=d$.
\end{example}

\end{remark}

\section{ Migus--P\u{a}unescu--Tib\u{a}r's augmented Newton polygon}\label{section3}
In this section, we give a short proof of  Migus--P\u{a}unescu--Tib\u{a}r's result \cite{MPT26} by relating the exponent of the transverse polynomial to the augmented  Newton polygon.  

Let us first recall the definition of augmented Newton polygon.  Let $\gamma$ be a tangential polar arc of $f$.
Denote by
 $ \mathcal{NP}(f_x,\gamma)$ the
 Newton polygon of $f_x(X + \gamma(Y), Y)$.   Let $$
h=h(\gamma)
=
\operatorname{ord}_y f(\gamma(y),y)$$
The \textit{augmented Newton polygon
of $f_x$} along $\gamma$ is
$$
\widehat{\mathcal{NP}}(f_x,\gamma)
:=
\operatorname{Conv}
\left(
\mathcal{NP}(f_x,\gamma)
\cup
\{(0,h-1)\}
\right).
$$

\begin{theorem}[{\cite[Theorem 4.5]{MPT26}}]\label{thm_MPT}
Let $f=g\circ\varphi$, where $\varphi$ is a bi-Lipschitz
homeomorphism germ, and let $\gamma$ and $\beta$ be matched
tangential polar arcs of $f$ and $g$, respectively. 
Then,
$$
\widehat{\mathcal{NP}}(f_x,\gamma)
=
\widehat{\mathcal{NP}}(g_x,\beta)
$$
\end{theorem}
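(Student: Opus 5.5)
The idea is to show that the augmented Newton polygon $\widehat{\mathcal{NP}}(f_x,\gamma)$ is determined by the function $q\mapsto L_f(q)=H_f(q)-q$ on $[1,d]$ together with $h$. Then Theorem~\ref{thm_main} and Proposition~\ref{prop_matching} give the equality at once, since they say $H_f=H_g$ on $[1,d]$ and $h$ is common to both arcs.

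The first step is to identify $L_f$ with the support function of the Newton polygon. Use the weight convention in which $X$ has weight $q$ and $Y$ has weight $1$. The proof of Proposition~\ref{prop_variation_scale} shows that
$$L_f(q)=\ord_y f_x(\gamma(y)+zy^q,y)\quad\text{for generic } z.$$
This generic order is exactly $\min\{ q\alpha+\beta : (\alpha,\beta)\in \mathcal{NP}(f_x,\gamma)\}$, so $L_f(q)$ is the support function of $\mathcal{NP}(f_x,\gamma)$ in direction $(q,1)$. The same holds for all real $q\ge 0$, because $L_f$ is given by the min-formula $qm_0+\sum_j m_j\min\{q,\delta_j\}$. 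Adjoining the point $(0,h-1)$, whose weighted order is $h-1$, shows that the support function of $\widehat{\mathcal{NP}}(f_x,\gamma)$ in direction $(q,1)$ is
$$\widehat L_f(q):=\min\{L_f(q),h-1\}.$$

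The second step is to compute $\widehat L_f$. By \eqref{eq_L(d)}, $\widehat L_f(q)=L_f(q)$ for $q\le d$ and $\widehat L_f(q)=h-1$ for $q\ge d$. We also need the directions $0\le q<1$. Since $\gamma$ is tangential and $\gamma=o(y)$ after the coordinate change, the initial part of $f_x(X+\gamma,Y)$ is $\partial_x f_m$. Mini-regularity gives $f_m(x,1)$ the degree $m$ in $x$, so the vertex $(m-1,0)$ lies on $\mathcal{NP}(f_x,\gamma)$. By the order count, every point has $\alpha+\beta\ge m-1$. Hence for $0\le q\le 1$ the minimum of $q\alpha+\beta$ is attained at $(m-1,0)$, giving $L_f(q)=(m-1)q$, and $L_f(q)<h-1$ there since $h>m$. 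Directions $(1,\lambda)$ with $\lambda\to0$ give the horizontal support, namely $\min \beta = 0$, which is the same for $f$ and $g$. Directions with negative components give nothing new: the polygon is the Newton polygon of a series, hence closed upward under adding $\mathbb R_{\ge0}^2$.

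The third step concludes. A convex region of this type, namely a Newton polygon plus the positive quadrant, is determined by its support function on nonnegative directions, and these directions are covered by the $(q,1)$ with $q\ge0$ together with $(1,0)$. The value $m$ is a topological invariant, hence bi-Lipschitz invariant, and it is also recovered as $L(1)+1$. Theorem~\ref{thm_main} gives $H_f=H_g$ on $[1,d]$, hence $L_f=L_g$ there, and $h$ is common to both arcs by Proposition~\ref{prop_matching}. So the support functions agree in all directions, and the augmented polygons coincide.

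The main obstacle I expect is the boundary of the range. Theorem~\ref{thm_main} only gives equality for $1<q<d$, and we must pass to the endpoints by continuity, which the theorem's last line provides. The region $q<1$ also needs care: it requires checking that the tangential and mini-regular normalization puts $(m-1,0)$ on the polygon for both germs. If that normalization is problematic, I would instead argue directly that every vertex of $\widehat{\mathcal{NP}}$ with $\beta>0$ corresponds to a slope $-1/q$ with $q\in[1,d]$. This holds because the edges of $\mathcal{NP}(f_x,\gamma)$ have slopes $-1/\delta_j$ with $\delta_j\ge1$, and the augmentation cuts off all edges with $\delta_j\ge d$.
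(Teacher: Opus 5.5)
Your proposal is correct and follows essentially the same route as the paper: you identify $L_f(q)=H_f(q)-q$ with the supporting line of co-slope $q$ of $\mathcal{NP}(f_x,\gamma)$, use Theorem~\ref{thm_main} (extended to $[1,d]$ by continuity) to get $L_f=L_g$ there, and fix the rest of the boundary through the common data $m$ and $h$. Your support-function treatment of the directions $q<1$ and $q>d$ just makes explicit what the paper compresses into the common endpoints $(m-1,0)$ and $(0,h-1)$. One small slip, which is harmless because both values are common to $f$ and $g$: in your convention the limiting direction $(1,0)$ measures $\min\alpha$ (equal to $0$ because of the point $(0,h-1)$), whereas $\min\beta=0$ is the $q=0$ direction.
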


\begin{proof}
Set $
F(X,Y):=f(X+\gamma(Y),Y)$. 
Then $
F_X(X,Y)=f_x(X+\gamma(Y),Y)$. 
As in \eqref{eq_taylor}, write
$$
F(X,Y)
=
F(0,Y)+\sum_{j\geq 2}a_j(Y)X^j,
$$
where $
a_j(Y)=c_jY^{\nu_j}+\hot,  c_j\neq 0$
and
 $\ord_Y F(0,Y)=h$. 

Observe that  
$
F_X(X,Y)
=
\sum_{j\geq 2}j\,a_j(Y)X^{j-1}
$; and  the corresponding leading Newton dots of $F_X$ are
$
(j-1,\nu_j), j\geq 2
$.  For $q\geq 1$, since $
L_f(q) =
 \min_{j\geq 2}\{\nu_j+(j-1)q\}$, $qu+v=L_f(q)$
is the supporting line of $\mathcal{NP}(f_x,\gamma)$ of co-slope $q$.

Since $f$ is mini-regular in $x$, all Puiseux roots of $f_x$ have order at least $1$.  It follows that every compact edge of
$\mathcal{NP}(f_x,\gamma)$ has co-slope at least $1$. On the other hand,  by \eqref{eq_L(d)}, $L_f(d)=h-1$, hence the line $du+v=h-1$ passes through the augmented point $(0,h-1)$ and supports $\mathcal{NP}(f_x,\gamma)$. It is the top supporting line of $\widehat{\mathcal{NP}}(f_x,\gamma)$, and every compact edge of the polygon has co-slope at most $d$. 

In addition, since $L_f(q)$ with $1\leq q\leq d$ is increasing, $L_f(q)\leq L_f(d)=h-1$. The added point $(0,h-1)$ lies on or above the line
$qu+v=L_f(q)$,
so this line is also the supporting line of $\widehat{\mathcal{NP}}(f_x,\gamma)$ for every $1\leq q\leq d$. The same  holds for $\widehat{\mathcal{NP}}(g_x,\beta)$.

By Theorem \ref{thm_main}, for $1\leq q\leq d$, $
H_f(q)=H_g(q)$, so $L_f(q) = L_g(q)$. 
The two augmented Newton polygons have the same supporting line for every co-slope $q\in[1,d]$. Mini-regularity gives the common right endpoint $(m-1,0)$, while augmentation gives the common left endpoint $(0,h-1)$ of the top edge. As a consequence, 
$
\widehat{\mathcal{NP}}(f_x,\gamma)
=
\widehat{\mathcal{NP}}(g_x,\beta)
$.
\end{proof}


\section{Examples}\label{section4}
The following  example shows that the transverse polynomial
contains information which is not detected by the leading Henry--Parusi\'nski data \cite{HP03}, by higher-order invariant \cite{MPT25} or by augmented Newton polygon data \cite{MPT26}.

Consider the polynomials
$$
P(z)=-6z^4+6z
$$
and
$$
Q(z)=2z^4+6z^2-2z+3.
$$
Define the two holomorphic germs
$$
f(x,y)
=
y^{11}+y^{12}P\left(\frac{x}{y^3}\right)
$$
and
$$
g(x,y)
=
y^{11}+y^{12}Q\left(\frac{x}{y^3}\right).
$$
After expansion,
$$
f(x,y)
=
-6x^4+6xy^9+y^{11}
$$
and
$$
g(x,y)
= 2x^4+6x^2y^6 -2xy^9+y^{11}+3y^{12}.
$$

The expressions involving $x/y^3$ are convenient to compute the transverse polynomials at scale $q = 3$. 

\begin{claim}
The polynomials $P$ and $Q$ have three simple critical points and
the same three distinct critical values. These critical values are
the roots of
$$
v^3-\frac{729}{32}=0.
$$
\end{claim}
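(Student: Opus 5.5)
The plan is a direct computation: for each of $P$ and $Q$, reduce the polynomial modulo its derivative so that at a critical point its value becomes a low-degree expression in the critical point, then compute the monic cubic satisfied by the critical values.

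\emph{The polynomial $P$.} We have $P'(z)=-24z^3+6$, whose zeros are the three distinct cube roots of $z^3=\tfrac14$. Since $P''(z)=-72z^2\neq0$ at these points, all three critical points are simple. On $z^3=\tfrac14$,
$$P(z)=z(-6z^3+6)=\tfrac92 z,$$
so each critical value $v$ satisfies
$$v^3=\left(\tfrac92\right)^3\cdot\tfrac14=\tfrac{729}{32}.$$
The map $z\mapsto \tfrac92 z$ is injective, so the three critical values are exactly the three distinct roots of $v^3-\tfrac{729}{32}$.

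\emph{The polynomial $Q$: critical points.} We have $Q'(z)=8z^3+12z-2=2(4z^3+6z-1)$. The discriminant of $4z^3+6z-1$ equals $-4\cdot4\cdot6^3-27\cdot16\cdot1\neq0$. Hence $Q'$ has three distinct roots $z_1,z_2,z_3$, and each is a simple critical point of $Q$.

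\emph{The polynomial $Q$: reduced value.} Using $4z^3=1-6z$ at a critical point gives $z^4=\tfrac{z(1-6z)}{4}$. Substituting into $Q$,
$$Q(z_i)=\tfrac{z_i}{2}-3z_i^2+6z_i^2-2z_i+3=3z_i^2-\tfrac32 z_i+3=:v_i.$$

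\emph{The polynomial $Q$: symmetric functions.} The $z_i$ are the roots of $z^3+\tfrac32 z-\tfrac14$, so
$$e_1=0,\qquad e_2=\tfrac32,\qquad e_3=\tfrac14.$$
The goal is to show, via Newton's identities for the power sums $p_k=\sum z_i^k$, that
$$\sum v_i=0,\qquad \sum_{i<j}v_iv_j=0,\qquad v_1v_2v_3=\tfrac{729}{32}.$$
The first identity is immediate: $\sum v_i=3p_2-\tfrac32 p_1+9=3(-3)+9=0$. The second and third require $p_3$ and $p_4$, and possibly the mixed sums $\sum z_i^2z_j^2$ and $\sum z_i^2z_j$. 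Alternatively, one can compute the resultant $\operatorname{Res}_z\bigl(4z^3+6z-1,\;3z^2-\tfrac32z+3-v\bigr)$ directly and check that it is proportional to $v^3-\tfrac{729}{32}$. Either route shows that the $v_i$ are the roots of $v^3-\tfrac{729}{32}$. This polynomial has three distinct roots, so the $v_i$ are distinct and coincide with the critical values of $P$.

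The main obstacle is the bookkeeping in the last step, namely the second and third symmetric functions of the $v_i$. As a consistency check, one can confirm that the real root of $4z^3+6z-1$ yields the real cube root $\tfrac{9}{2}\cdot 4^{-1/3}$.
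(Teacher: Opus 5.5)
Your treatment of $P$ and your reduction $Q(z_i)=3z_i^2-\tfrac32 z_i+3$ are correct. Your strategy for $Q$ also differs from the paper's. You aim to identify the whole monic cubic $\prod_i(v-v_i)$ through symmetric functions. The paper instead reduces $(2b^2-b+2)^3$ modulo $4b^3+6b-1$ to $\tfrac{27}{4}$, so that each $Q(b_i)^3=\tfrac{729}{32}$. It then needs a separate distinctness argument: $Q(b_1)=Q(b_2)$ with $b_1\neq b_2$ forces $b_1+b_2=\tfrac12$, hence $b_3=-\tfrac12$, which is not a root. Your route, once completed, gives distinctness and the match with the critical values of $P$ in one stroke, because $v^3-\tfrac{729}{32}$ has simple roots.

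As written, however, the proposal stops exactly where the content of the claim lies. The identities $\sum_{i<j}v_iv_j=0$ and $v_1v_2v_3=\tfrac{729}{32}$ are stated as goals and then asserted (``either route shows'') without any computation. For a purely computational statement, that computation is the proof, so this is a genuine gap, although not a conceptual one. Your bookkeeping remark is also slightly off. The power sums $p_3,p_4$ suffice for $e_2(v)$, but not for $e_3(v)$, which via Newton's identities would need $p_5,p_6$, or else a norm/resultant computation.

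The gap closes quickly.

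\emph{Second symmetric function.} From $z_i^3=\tfrac14-\tfrac32 z_i$ you get $p_3=\tfrac34$ and $p_4=\tfrac14p_1-\tfrac32p_2=\tfrac92$. Since $v_i^2=9z_i^4-9z_i^3+\tfrac{81}{4}z_i^2-9z_i+9$, you get $\sum v_i^2=\tfrac{81}{2}-\tfrac{27}{4}-\tfrac{243}{4}+27=0$. Hence $e_2(v)=\tfrac12\bigl((\sum v_i)^2-\sum v_i^2\bigr)=0$, and no mixed sums are needed.

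\emph{Product.} Write $v_i=\tfrac32(2z_i^2-z_i+2)$ and $2z^2-z+2=2(z-r_1)(z-r_2)$. With $m(z)=z^3+\tfrac32z-\tfrac14$ this gives $\prod_i(2z_i^2-z_i+2)=8\,m(r_1)m(r_2)$. The relation $r^2=\tfrac r2-1$ gives $m(r)=\tfrac34(r-1)$. Then $r_1+r_2=\tfrac12$ and $r_1r_2=1$ give $m(r_1)m(r_2)=\tfrac{9}{16}\cdot\tfrac32=\tfrac{27}{32}$. So $\prod_i(2z_i^2-z_i+2)=\tfrac{27}{4}$, and $v_1v_2v_3=\tfrac{27}{8}\cdot\tfrac{27}{4}=\tfrac{729}{32}$.

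Together with your $\sum v_i=0$, this shows $\prod_i(v-v_i)=v^3-\tfrac{729}{32}$, which completes your argument.
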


\begin{proof}
For $P$, we have $
P'(z)=-24z^3+6=6(1-4z^3).$
Hence, the critical points $c_1,c_2,c_3$ are the three roots of
$
c^3=\frac14.
$
At such a critical point,
$$
P(c)
=
-6c^4+6c
=
-6c\left(\frac14\right)+6c
=
\frac92c.
$$
Hence,
$$
P(c)^3
=
\left(\frac92\right)^3 c^3
=
\frac{729}{8}\cdot\frac14
=
\frac{729}{32}.
$$
Thus the critical values of $P$ are exactly the three roots of
$$
v^3-\frac{729}{32}=0.
$$

For $Q$,
$$
Q'(z)=8z^3+12z-2
     =2(4z^3+6z-1).
$$
Hence, its critical points $b_1,b_2,b_3$ are the roots of
$4b^3+6b-1=0$. They are all simple since there is no common root of 
 $4b^3+6b-1$ and its derivative
$12b^2+6$. Modulo the relation $
4b^3+6b-1=0 $ we have 
$$
Q(b)
=
3b^2-\frac32b+3 = \frac 3 2(2b^2 - b + 2).
$$
Expand
$$
(2b^2 - b + 2)^3 = 8b^6 - 12b^5 + 30b^4 - 25b^3 + 30b^2 - 12b + 8.
$$
From $4b^3+6b-1=0$, we get
$$
b^3 = \frac{1}{4} - \frac{3}{2}b,
$$
then
$$
b^4 = \frac{b}{4} - \frac{3}{2}b^2, \quad
b^5 = \frac{1}{4}b^2 - \frac{3}{2}b^3 = \frac{9}{4}b + \frac{1}{4}b^2 - \frac{3}{8},
$$
and
$$
b^6 = (b^3)^2 = \left(\frac{1}{4} - \frac{3}{2}b\right)^2 = \frac{1}{16} - \frac{3}{4}b + \frac{9}{4}b^2.
$$
Substituting these into the expansion gives $
(2b^2 - b + 2)^3 = \frac{27}{4}$. 
Hence,  $
Q(b)^3=\frac{729}{32}$. 

Now we show that these three critical values are distinct. Suppose on the contradiction that two distinct critical points $b_1,b_2$ satisfy $
Q(b_1)=Q(b_2)$.
Since $Q(b)=3b^2-\frac32b+3$, we have  
$$
3(b_1^2-b_2^2)
-\frac32(b_1-b_2)=0. 
$$
It follows that $
b_1+b_2=\frac12$ (since $b_1 \neq b_2)$. 
The sum of the three roots of
$
4b^3+6b-1=0$
is zero, so the third root must be 
$b_3=-\frac12$. 
However, 
$$
4\left(-\frac12\right)^3
+
6\left(-\frac12\right)-1
=
-\frac12-3-1
\neq0,
$$
a contradiction.
\end{proof}

Let $v_1,v_2,v_3$
denote these common critical values. Let $c_i$ and $b_i$ denote the critical points of $P$ and $Q$ respectively such that 
$$
P(c_i)=Q(b_i)=v_i,
\quad i=1,2,3.
$$

\begin{claim}
    Henry--Parusi\'nski polar data are the same
\end{claim}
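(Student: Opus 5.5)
The plan is to compute the polar data of $f$ and $g$ explicitly, using the factored form $f=y^{11}+y^{12}P(x/y^3)$ and $g=y^{11}+y^{12}Q(x/y^3)$, and to check that all the ingredients of the Henry--Parusi\'nski invariant coincide. These ingredients are: the orders $h$, the canyon degrees $d$, and the leading coefficients of $f$ along the polar arcs. By Proposition \ref{prop_matching} the leading coefficients are only defined up to the common rescaling $y\mapsto Ay$, which multiplies the leading coefficient of an order-$h$ term by $A^{h}$.

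\emph{Step 1: polar arcs and their orders.} Differentiating gives
$$f_x=y^{9}P'(x/y^3),\qquad g_x=y^{9}Q'(x/y^3).$$
Hence the polar arcs are exactly $\gamma_i(y)=c_iy^3$ and $\beta_i(y)=b_iy^3$, $i=1,2,3$, where $c_i$ and $b_i$ are the simple critical points from the previous claim. Since there are three distinct simple roots, these are three distinct arcs, each of multiplicity one, with pairwise contact order $3$. Both germs have multiplicity $4$, are mini-regular in $x$, and have tangent cone $\{x=0\}$; all polar arcs are tangent to $x=0$, so they are tangential. Along the arcs,
$$f(\gamma_i(y),y)=y^{11}+v_iy^{12},\qquad g(\beta_i(y),y)=y^{11}+v_iy^{12}.$$
Thus $h=11$ for every polar arc of both germs, and the leading coefficient is $1$ in every case.

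\emph{Step 2: canyon degrees.} I will use the formula for $L_f(q)$ together with \eqref{eq_L(d)}. The Taylor expansion at $c_i$ gives
$$f(c_iy^3+X,y)-f(c_iy^3,y)=y^{12}\bigl(P(c_i+X/y^3)-P(c_i)\bigr).$$
Its $X^j$ coefficient is $y^{12-3j}P^{(j)}(c_i)/j!$, so $\nu_j=12-3j$ for $j=2,3,4$. Since $P''(c_i)\neq0$ (the critical points are simple), the minimum in
$$L_f(q)=\min_j\bigl(\nu_j+(j-1)q\bigr)$$
for $q\ge3$ is attained at $j=2$, which gives $L_f(q)=q+6$. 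Solving $L_f(d)=h-1=10$ gives $d=4$ for every $\gamma_i$. The identical computation with $Q''(b_i)\neq0$ gives $d(\beta_i)=4$.

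\emph{Step 3: conclusion.} The two germs have the same multiset of triples $(h,d,\text{leading coefficient})=(11,4,1)$, three canyons each, of multiplicity one, with mutual contacts $3$. Any rescaling $A$ with $A^{11}=1$ matches these data. So the Henry--Parusi\'nski polar data of \cite{HP03}, recovered via canyons as in \cite{MPT22}, agree.

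The main obstacle is not computational. It is making sure that the precise formulation of the Henry--Parusi\'nski invariant used here involves only the leading terms $y^{h}$ along polar arcs, grouped by canyon, modulo $y\mapsto Ay$. The second-order values $v_i$ belong to the finer invariants treated in later claims, and those coincide anyway by the previous claim. I would state the invariant explicitly in this normalized form before comparing.
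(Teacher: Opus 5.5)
Your proposal is correct and follows essentially the same route as the paper. Like the paper, you read off the polar arcs $\gamma_i=c_iy^3$ and $\beta_i=b_iy^3$ from $f_x=y^9P'(x/y^3)$ and $g_x=y^9Q'(x/y^3)$, and observe that $f(\gamma_i(y),y)=g(\beta_i(y),y)=y^{11}+v_iy^{12}$, so every leading polar term is $y^{11}$. Your extra checks are also correct: tangentiality, $h=11$, contact order $3$, and $d=4$ via $L_f(q)=q+6$ for $q\ge 3$. The paper asserts these facts without detail in the next claim.
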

\begin{proof}
Since $
f_x(x,y)
=
-24x^3+6y^9$, there are three polar arcs 
$$
\gamma_i(y)=c_i y^3,
\quad
c_i^3=\frac14.
$$
These are precisely the critical points of $P$. Similarly,
$$
g_x(x,y)
=
8x^3+12xy^6-2y^9.
$$
Putting $x=by^3$
gives $
g_x(by^3,y)
=
2y^9(4b^3+6b-1)$. Hence, the polar arcs of $g$ are
$$\beta_i(y)=b_i y^3, \text{ where } 4b_i^3+6b_i-1=0.$$ By construction,
\begin{equation}\label{eq_expression_f}
f(\gamma_i(y),y)
=
y^{11}+P(c_i)y^{12}
=
y^{11}+v_i y^{12},
\end{equation}
and
\begin{equation}\label{eq_expression_g}
g(\beta_i(y),y)
=
y^{11}+Q(b_i)y^{12}
=
y^{11}+v_i y^{12}.
\end{equation}
Consequently,
$$
f(\gamma_i(y),y)
=
g(\beta_i(y),y)
=
y^{11}+v_i y^{12}.
$$
Thus, the leading Henry--Parusi\'nski polar data of the
two germs agree.
\end{proof}

\begin{claim}
    Higher invariants agree
\end{claim}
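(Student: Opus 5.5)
The claim says the higher polar-value invariants agree, namely $\mathrm{Inv}^2$ of \cite{Ng24} and the recursive invariants of Migus--P\u{a}unescu--Tib\u{a}r \cite{MPT25}. Both are built only from the polar values $f(\gamma_i(y),y)$ (their full Puiseux expansions), the canyon degrees, and the mutual contact orders of the canyons. So I will check that this package of data coincides for $f$ and $g$ under the matching $\gamma_i\leftrightarrow\beta_i$.

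\textbf{Step 1: exact polar values.} Both $f$ and $g$ are weighted homogeneous modulo the normalization $x=wy^3$. We have $f(wy^3,y)=y^{11}+y^{12}P(w)$ exactly, and the same holds for $g$ with $Q$. Each polar arc is exactly $\gamma_i=c_iy^3$ or $\beta_i=b_iy^3$, with no higher-order terms. Hence the polar values are the exact series $y^{11}+v_iy^{12}$ given in \eqref{eq_expression_f} and \eqref{eq_expression_g}. This includes all higher terms, not just the leading ones.

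\textbf{Step 2: canyon data and contacts.} All polar arcs are simple, distinct and have pairwise contact order $3$. For $f$ this is because the $c_i$ are distinct; for $g$ it is because the $b_i$ are simple roots, as shown in the first claim. So $\delta_j=3$ for all pairs, on both sides. By \eqref{eq_L(d)}, $d$ is determined by $L(d)=h-1=10$. Proposition~\ref{prop_variation_scale} gives $L(q)=q+2\min\{q,3\}$, so $d=4$ for every arc of $f$ and of $g$. Multiplicities are all $1$. The contact structure is therefore the same: three canyons of degree $4$ with mutual contact $3$ and common $h=11$.

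\textbf{Step 3: normalized comparison.} For every pair $i\neq j$, the differences $f(\gamma_i)-f(\gamma_j)=(v_i-v_j)y^{12}$ equal the corresponding $g$-differences exactly. The normalized expansions entering $\mathrm{Inv}^2$ and the higher recursive invariants are functions of these differences, of the leading coefficient $1$ of $y^{11}$, and of the contact data from Step 2. Hence they coincide, under the identity bijection and with trivial rescaling $A=1$.

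The main obstacle is being precise about which data the invariants of \cite{Ng24,MPT25} actually use. I would recall their definitions as given there: polar values taken modulo rescaling $y\mapsto Ay$ and the action on clusters of canyons with the same contact. Then I would check that each ingredient is a function of the exact polar values, $h$, $d$ and the contacts, all of which were matched above.
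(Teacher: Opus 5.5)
Your proposal is correct and follows the same route as the paper: under the matching $\gamma_i\leftrightarrow\beta_i$ the polar values are identical exact series $y^{11}+v_iy^{12}$, all pairwise contacts equal $3$, and all canyon degrees equal $4$, so the polar-value data of \cite{MPT25} coincide. The only difference is that you justify $d=4$ explicitly, via $L(q)=q+2\min\{q,3\}$ and $L(d)=h-1=10$, whereas the paper simply asserts it.
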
 
\begin{proof}
It is obvious that every pair of distinct polar arcs of $f$ and $g$ has contact order $\delta = 3$. All polars have canyon degree $d= 4$. Since the expressions \eqref{eq_expression_f} and \eqref{eq_expression_g} are the same, the complete collections of 
higher-level polar-value data of \cite{MPT25} agree. 
\end{proof}

\begin{claim} Augmented Newton polygons coincide \end{claim}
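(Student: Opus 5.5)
The plan is to compute both augmented Newton polygons explicitly, polar arc by polar arc, using the weighted-homogeneous form of $f$ and $g$. The key observation is that every polar arc sits exactly at the scale $y^3$. So the expansion of $f$ around a polar arc is just a Taylor expansion of $P$ around a critical point, and likewise for $g$ and $Q$.

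First, fix a polar arc $\gamma_i(y)=c_iy^3$ of $f$ and set $F(X,Y)=f(X+\gamma_i(Y),Y)$. From $f(x,y)=y^{11}+y^{12}P(x/y^3)$ we get
$$
F(X,Y)=Y^{11}+Y^{12}P\bigl(c_i+X/Y^3\bigr).
$$
Since $P$ is a quartic polynomial and $P'(c_i)=0$, Taylor expansion gives exactly
$$
F(X,Y)-F(0,Y)=\sum_{j=2}^{4}\frac{P^{(j)}(c_i)}{j!}\,Y^{12-3j}X^j .
$$
So in the notation of the proof of Theorem~\ref{thm_MPT}, $a_j(Y)=\frac{P^{(j)}(c_i)}{j!}Y^{12-3j}$ and $\nu_j=12-3j$, provided each coefficient is nonzero. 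The same computation for $\beta_i(y)=b_iy^3$ and $Q$ gives $a_j(Y)=\frac{Q^{(j)}(b_i)}{j!}Y^{12-3j}$.

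The only real point to check is the nonvanishing of these coefficients for $j=2,3,4$.
\begin{itemize}
\item For $P$: $P''(c)=-72c^2$, $P'''(c)=-144c$ and $P''''=-144$. All are nonzero because $c^3=\tfrac14$ forces $c\neq0$.
\item For $Q$: $Q''''=48$, and $Q'''(b)=48b\neq0$ since $b=0$ is not a root of $4b^3+6b-1$.
\item Still for $Q$: $Q''(b)=24b^2+12$. If $b^2=-\tfrac12$, then $4b^3+6b-1=b(4b^2+6)-1=4b-1$, so $b=\tfrac14$, contradicting $b^2=-\tfrac12$. Hence $Q''(b_i)\neq0$.
\end{itemize}

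Consequently, for both germs and every polar arc, $F_X=\sum_j j\,a_j(Y)X^{j-1}$ has Newton dots exactly $(1,6),(2,3),(3,0)$. These all lie on the line $3u+v=9$, so $\mathcal{NP}(f_x,\gamma_i)$ and $\mathcal{NP}(g_x,\beta_i)$ coincide. Moreover, each is the Newton polygon with vertices $(1,6)$ and $(3,0)$, i.e.\ the convex hull of these points plus the positive quadrant.

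Finally, by \eqref{eq_expression_f} and \eqref{eq_expression_g}, $h=11$ for every polar arc of both germs, so the augmenting point is $(0,10)$ in all cases. Hence $\widehat{\mathcal{NP}}(f_x,\gamma_i)=\widehat{\mathcal{NP}}(g_x,\beta_i)$, and in fact all six augmented polygons coincide.

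As a consistency check, the top edge from $(0,10)$ to $(1,6)$ has co-slope $4$, which matches the canyon degree $d=4$ claimed earlier. I expect no genuine obstacle. The step needing care is the nonvanishing of $Q''(b_i)$ and $Q'''(b_i)$: without it the middle dot $(2,3)$ or the dot $(1,6)$ could disappear and the polygons could differ.
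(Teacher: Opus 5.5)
Your proposal is correct and follows essentially the same route as the paper, which Taylor-expands $f_x(c_iy^3+X,y)=y^9P'(c_i+X/y^3)$ (and the analogue for $Q$) around the critical point to get the Newton dots $(1,6),(2,3),(3,0)$, then adds the common point $(0,h-1)=(0,10)$. Your explicit check that $P^{(j)}(c_i)$ and $Q^{(j)}(b_i)$ are nonzero for $j=2,3,4$ supplies a step the paper only asserts.
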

\begin{proof}
Along a polar arc $
x=c_i y^3+X$, since $
f_x(x,y)
=
y^9
P'\left(\frac{x}{y^3}\right)$
we have
$$
f_x(c_i y^3+X,y)
=
y^9
P'\left(c_i+\frac{X}{y^3}\right).
$$
Since $P'(c_i)=0$,
$$
f_x(c_i y^3+X,y)
=
P''(c_i)y^6X
+
\frac12P'''(c_i)y^3X^2
+
\frac16P^{(4)}(c_i)X^3.
$$
All the corresponding coefficients are nonzero. The  Newton dots are therefore
$$
(1,6),\quad(2,3),\quad(3,0).
$$
The same calculation applies to $g$:
$$
g_x(b_i y^3+X,y)
=
Q''(b_i)y^6X
+
\frac12Q'''(b_i)y^3X^2
+
\frac16Q^{(4)}(b_i)X^3.
$$
Again the Newton dots are
$$
(1,6),\quad(2,3),\quad(3,0).
$$
Since $h-1=10$,
both germs have the same augmented polygon,
with vertices
$$
(0,10),\quad (1,6),\quad (3,0).
$$
\end{proof}

\begin{claim}

The germs $f$ and $g$ are not bi-Lipschitz right equivalent.
\end{claim}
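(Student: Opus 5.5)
The plan is to argue by contradiction using the transformation law at the intermediate scale $q=3$, and to separate $f$ and $g$ with the cross-ratio-type quantity $J$ from Corollary~\ref{Cor_main}. Suppose $f=g\circ\varphi$ for a bi-Lipschitz homeomorphism germ $\varphi$.

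\textbf{Step 1: set up the matching.} Both germs have multiplicity $4$ and are mini-regular in $x$, since their initial forms are $-6x^4$ and $2x^4$. The polar arcs $\gamma_i(y)=c_iy^3$ and $\beta_j(y)=b_jy^3$ are $o(y)$ and hence tangent to $\{x=0\}$, which is the tangent cone of both zero sets. So they are tangential, in coordinates already of the form used in Proposition~\ref{prop_matching}. By the previous claim they have canyon degree $d=4>1$.

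Applying Proposition~\ref{prop_matching} to $\gamma_1$ gives a matching polar arc $\beta$ of $g$. Since $g$ has only the three polar arcs $\beta_1,\beta_2,\beta_3$, we get $\beta=\beta_j$ for some $j$. The scale $q=3$ lies in $(1,d)=(1,4)$, so Theorem~\ref{thm_main} and Corollary~\ref{Cor_main} apply at $q=3$.

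\textbf{Step 2: compute the transverse polynomials at $q=3$.} Because $f=y^{11}+y^{12}P(x/y^3)$, we have
$$f(c_iy^3+zy^3,y)-f(c_iy^3,y)=y^{12}\bigl(P(c_i+z)-P(c_i)\bigr),$$
which is exact with no error term. Hence $H_f(3)=12$ and
$$P_{f,\gamma_i,3}(z)=\tfrac12P''(c_i)z^2+\tfrac16P'''(c_i)z^3+\tfrac1{24}P^{(4)}(c_i)z^4.$$
The linear term vanishes because $P'(c_i)=0$. This gives $A_2=-36c_i^2$, $A_3=-24c_i$ and $A_4=-6$, and $A_2A_4\neq0$ because $c_i\neq0$. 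Therefore
$$J(P_{f,\gamma_i,3})=\frac{576c_i^2}{216c_i^2}=\frac83.$$

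The same computation for $g$ gives $A_2=12b_j^2+6$, $A_3=8b_j$ and $A_4=2$. Here $A_2\neq0$: otherwise $b_j^2=-\tfrac12$, and then $12b_j^2+6=0$ would make $b_j$ a common root of $4b^3+6b-1$ and its derivative, contradicting the simplicity shown earlier. Thus
$$J(P_{g,\beta_j,3})=\frac{16b_j^2}{6b_j^2+3}.$$

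\textbf{Step 3: conclude.} Corollary~\ref{Cor_main} forces $16b_j^2/(6b_j^2+3)=8/3$. This is equivalent to $48b_j^2=48b_j^2+24$, which is impossible. So no such $\varphi$ exists.

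The argument is essentially computational. The only delicate point is checking that the hypotheses of Proposition~\ref{prop_matching} and Theorem~\ref{thm_main} hold for these specific coordinates: tangentiality, mini-regularity, and $q=3$ lying strictly inside $(1,d)$. That is where I would be careful, together with the verification that $A_2A_4\neq0$ on both sides.
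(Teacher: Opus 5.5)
Your proof is correct and follows the paper's argument. You compute the transverse polynomials at the scale $q=3\in(1,4)$, obtain $J=8/3$ for every polar arc of $f$ and $16b_j^2/(6b_j^2+3)$ for every polar arc of $g$, and conclude via Corollary~\ref{Cor_main}; your explicit checks (tangentiality, that the matched arc must be some $\beta_j$, $A_2\neq0$ for $g$, and the reduction to $48b_j^2=48b_j^2+24$) supply details the paper leaves implicit.
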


\begin{proof}

The relevant transition occurs at
$q=3< d = 4$. 
For $f$, $$P_{f,\gamma_i,3}(z)=P(c_i+z)-P(c_i).$$
Since $
P(z)=-6z^4+6z$
and
$
c_i^3=\frac14,$
we obtain
$$
\begin{aligned}
P(c_i+z)-P(c_i)
&=
-6\bigl((c_i+z)^4-c_i^4\bigr)+6z\\
&=
-24c_i^3z
-36c_i^2z^2
-24c_i z^3
-6z^4
+6z.
\end{aligned}
$$
Since  $24c_i^3=6$,
$$
P_{f,\gamma_i,3}(z)
=
-36c_i^2z^2
-24c_i z^3
-6z^4.
$$

For $g$,
$
P_{g,\beta_i,3}(z)
=
Q(b_i+z)-Q(b_i)
$.
Since
$$
Q(z)=2z^4+6z^2-2z+3,
$$
we obtain
$$Q(b_i+z)-Q(b_i)
=(8b_i^3+12b_i-2)z + (12b_i^2+6)z^2 +8b_i z^3+2z^4.
$$
Since $
4b_i^3+6b_i-1=0$, 
$$
P_{g,\beta_i,3}(z) = 6(2b_i^2+1)z^2 + 8b_i z^3 + 2z^4.$$
Note from Corollary \ref{Cor_main} that for
$$
R(z)=A_2z^2+A_3z^3+A_4z^4,
\quad A_2A_4\neq0,
$$
the quantity $
J(R) = \frac{A_3^2}{A_2A_4}$
is preserved under the marching polar. 

For $f$, we have
$$
A_2=-36c_i^2,
\quad
A_3=-24c_i,
\quad
A_4=-6.
$$
Thus
$$
J(P_{f,\gamma_i,3}) = \frac{(-24c_i)^2}{(-36c_i^2 (-6)} = \frac{576c_i^2}{216c_i^2} =\frac83
$$

For $g$,
$$
A_2=6(2b_i^2+1),
\quad
A_3=8b_i,
\quad
A_4=2.
$$
Hence
$$ J(P_{g,\beta_i,3})= \frac{(8b_i)^2}{6(2b_i^2+1)\cdot 2}=
\frac{16b_i^2}{3(2b_i^2+1)} \neq  \frac 83
$$

Since  $J(P_{g,\beta_i,3})\neq J(P_{f,\gamma_j,3})$
for every $i,j$, by Corollary  \ref{Cor_main},  $f$ and $g$ are not bi-Lipschitz right equivalent.
\end{proof}

\bibliographystyle{abbrv}
\bibliography{references.bib}
\end{document}